\newtheorem{theorem}{Theorem}
\newtheorem{proposition}[theorem]{Proposition}
\newtheorem{lemma}[theorem]{Lemma}
\theoremstyle{definition}
\numberwithin{equation}{section}
\newcommand{\N}{{\mathbb N}}
\newcommand{\R}{{\mathbb R}}
\newcommand{\st}{\;\vert\;}
\newcommand{\abs}[1]{\lvert #1 \rvert}
\newcommand{\scalprod}[2]{( #1 \vert #2)}
\newcommand{\beq}{\begin{equation}}
\newcommand{\eeq}{\end{equation}}
\newcommand{\eeqa}{\begin{eqnarray}}
\newcommand{\beqa}{\end{eqnarray}}
\newcommand{\beqn}{\begin{eqnarray*}}
\newcommand{\eeqn}{\end{eqnarray*}}
\newcommand{\dif}{\,\mathrm{d}}
\title[Hardy-Littlewood-Sobolev critical exponent]
{Groundstates of nonlinear Choquard equations:\\ Hardy-Littlewood-Sobolev critical exponent}
\author{Vitaly Moroz}
\address{Swansea University\\ Department of Mathematics\\ Singleton Park\\
Swansea\\ SA2~8PP\\ Wales, United Kingdom}	
\email{V.Moroz@swansea.ac.uk}
\author{Jean Van Schaftingen}
\address{Universit\'e Catholique de Louvain\\ Institut de Recherche en Math\'ematique et Phy\-sique\\ Chemin du Cyclotron 2 bte L7.01.01\\ 1348 Louvain-la-Neuve \\ Belgium}
\email{Jean.VanSchaftingen@uclouvain.be}
\keywords{Choquard equation; Hartree equation; nonlinear Schr\"odinger equation; nonlocal problem; Riesz potential; Hardy-Littlewood-Sobolev inequality; lower critical exponent; strict inequality; concentration-compactness; concentration at infinity.}
\subjclass[2010]{35J20 (35B33, 35J91, 35J47, 35J50, 35Q55)}
\begin{document}

\begin{abstract}
We consider nonlinear Choquard equation
\begin{equation*}
 - \Delta u + V u  = \bigl(I_\alpha \ast \abs{u}^{\frac{\alpha}{N}+1}\bigr) \abs{u}^{\frac{\alpha}{N}-1} u\quad\text{in \(\R^N\)},
\end{equation*}
where \(N \ge 3\),  $V \in L^\infty (\R^N)$ is an external potential and \(I_\alpha (x)\)
is the Riesz potential of order \(\alpha \in (0, N)\).
The power $\frac{\alpha}{N}+1$ in the nonlocal part of the equation is critical with respect to the Hardy-Littlewood-Sobolev inequality.
As a consequence, in the associated minimization problem a loss of compactness may occur.
We prove that if $\liminf_{\abs{x} \to \infty} \bigl(1 - V (x)\bigr)\abs{x}^2 > \frac{N^2 (N - 2)}{4 (N + 1)}$
then the equation has a nontrivial solution. We also discuss some necessary conditions for the existence of a solution.
Our considerations are based on a concentration compactness argument and a nonlocal version of Brezis-Lieb lemma.
\end{abstract}

\maketitle

\maxtocdepth{section}
\tableofcontents

\section{Introduction and results}

We consider a nonlinear Choquard type equation
\begin{equation*}
\tag{$\mathcal{P}$}
\label{equationNLChoquard-p}
 - \Delta u + V u  = \bigl(I_\alpha \ast \abs{u}^p\bigr) \abs{u}^{p-2} u\quad\text{in \(\R^N\)},
\end{equation*}
where \(N \in \N\), $\alpha\in(0,N)$, $p>1$,
\(I_\alpha : \R^N\setminus\{0\} \to \R\) is the Riesz potential of order \(\alpha \in (0, N)\) defined for every \(x \in \R^N \setminus \{0\}\) by
\[
  I_\alpha(x)=\frac{\Gamma(\tfrac{N-\alpha}{2})}
                   {2^{\alpha}\pi^{N/2}\Gamma(\tfrac{\alpha}{2})\, \abs{x}^{N-\alpha}},
\]
and $V \in L^\infty (\R^N)$ is an external potential.

For \(N = 3\), \(\alpha = 2\) and \(p = 2\) equation \eqref{equationNLChoquard-p} is the  \emph{Choquard-Pekar equation}
which goes back to the 1954's work by S.\thinspace I.\thinspace Pekar  on quantum theory of a Polaron at rest \citelist{\cite{Pekar1954}\cite{DevreeseAlexandrov2009}*{Section 2.1}}
and to 1976's model of P.\thinspace Choquard of an electron trapped in its own hole, in an approximation to Hartree-Fock theory of one-component plasma \cite{Lieb1977}.
In the 1990's the same equation reemerged as a model of self-gravitating matter  \citelist{\cite{KRWJones1995newtonian}\cite{Moroz-Penrose-Tod-1998}}
and is known in that context as the \emph{Schr\"odinger-Newton equation}.

Mathematically, the existence and qualitative properties of solutions of Choquard equation \eqref{equationNLChoquard-p}
have been studied for a few decades by variational methods,
see \citelist{\cite{Lieb1977}\cite{Lions1980}\cite{Menzala1980}\cite{Lions1984-1}*{Chapter III}} for earlier and \citelist{\cite{Ma-Zhao-2010}\cite{CingolaniSecchiSquassina2010}\cite{CingolaniClappSecchi2012}\cite{CingolaniSecchi}\cite{ClappSalazar}\cite{MorozVanSchaftingen2013JFA}
\cite{MorozVanSchaftingenBL}\cite{MorozVanSchaftingen2014CalcVar}} for recent work on the problem and further references therein.

The following sharp characterisation of the existence and nonexistence of nontrivial solutions of \eqref{equationNLChoquard-p}
in the case of constant potential $V$ can be found in \cite{MorozVanSchaftingen2013JFA}.
\begin{theorem}
[Ground states of \eqref{equationNLChoquard-p} with constant potential \cite{MorozVanSchaftingen2013JFA}*{theorems 1 and 2}]
Assume that $V\equiv 1$.
Then \eqref{equationNLChoquard-p} has a nontrivial solution \(u \in H^1(\R^N)\cap L^\frac{2Np}{N-\alpha}(\R^N)\)
with $\nabla u\in H^1_{\mathrm{loc}}(\R^N)\cap L^\frac{2Np}{N-\alpha}_{\mathrm{loc}} (\R^N)$
if and only if \(p\in\big(\frac{\alpha}{N}+1,\frac{N + \alpha}{(N - 2)_+}\big)\).
\end{theorem}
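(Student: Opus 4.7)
The plan is two-directional: establish necessity of the range via Pohozaev-type identities, and establish existence via a constrained variational problem combined with symmetrization.

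For the necessity direction, assume that $u$ is a nontrivial solution with the stated regularity. Testing the equation against $u$ and integrating yields the Nehari identity
\[
 \int_{\R^N} \abs{\nabla u}^2 + \int_{\R^N} \abs{u}^2 = \int_{\R^N} \bigl(I_\alpha \ast \abs{u}^p\bigr)\abs{u}^p,
\]
while testing against $x \cdot \nabla u$—whose justification uses exactly the local regularity $\nabla u \in H^1_{\loc}$ so that the identity $\int u\,(x \cdot \nabla u) = -\tfrac{N}{2}\int u^2$ and the analogous integration by parts for the nonlocal term are legitimate—produces the Pohozaev identity
\[
 \tfrac{N-2}{2}\int_{\R^N}\abs{\nabla u}^2 + \tfrac{N}{2}\int_{\R^N}\abs{u}^2 = \tfrac{N+\alpha}{2p}\int_{\R^N}\bigl(I_\alpha \ast \abs{u}^p\bigr)\abs{u}^p.
\]
The factor $\tfrac{N+\alpha}{2p}$ on the right arises because $f \mapsto \int(I_\alpha \ast f) f$ is $(-N-\alpha)$-homogeneous under dilations $f(\cdot) \mapsto f(\lambda \cdot)$. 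Eliminating the nonlocal integral between the two identities gives a linear relation whose coefficients of $\int\abs{\nabla u}^2$ and $\int u^2$ are $(N-2)p - (N+\alpha)$ and $Np-(N+\alpha)$ respectively; strict positivity of both integrals forces these coefficients to have opposite signs, which for $N \ge 3$ happens exactly when $\tfrac{\alpha}{N}+1 < p < \tfrac{N+\alpha}{N-2}$.

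For sufficiency within the open range, I would treat the energy functional
\[
 I(u) = \tfrac{1}{2}\int \abs{\nabla u}^2 + \tfrac{1}{2}\int \abs{u}^2 - \tfrac{1}{2p}\int \bigl(I_\alpha \ast \abs{u}^p\bigr)\abs{u}^p,
\]
which is of class $C^1$ on $H^1(\R^N)$ by the Hardy--Littlewood--Sobolev inequality combined with the Sobolev embedding under the strict subcriticality $\tfrac{\alpha}{N}+1 < p < \tfrac{N+\alpha}{(N-2)_+}$. Rather than attack the mountain-pass directly, I would minimise the scale-invariant quotient
\[
 S = \inf_{u \in H^1(\R^N) \setminus \{0\}} \frac{\bigl(\int \abs{\nabla u}^2 + \int \abs{u}^2\bigr)^p}{\int \bigl(I_\alpha \ast \abs{u}^p\bigr)\abs{u}^p};
\]
a minimiser becomes a solution after a two-parameter rescaling $u(x) \mapsto \mu\, u(\lambda x)$ which kills the two Lagrange factors simultaneously, and a comparison of values on the Nehari manifold identifies it as a ground state.

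The central obstruction is the loss of compactness for minimising sequences caused by translation invariance and by the non-compactness of $H^1(\R^N) \hookrightarrow L^q(\R^N)$. I would neutralise it by symmetrisation: replacing $u$ by its Schwarz symmetric decreasing rearrangement $u^*$ preserves $\norm{u}_{L^2}$, does not increase $\norm{\nabla u}_{L^2}$ by P\'olya--Szeg\H{o}, and does not decrease the nonlocal term by the Riesz rearrangement inequality, so the quotient defining $S$ is non-increased, and the infimum may be sought over radial decreasing functions. On that cone the Strauss-type compact embedding of radial $H^1$ into $L^q$ for $q \in (2, \tfrac{2N}{N-2})$, together with the continuity of the Choquard bilinear form with respect to such convergence, lets one pass to the limit in a minimising sequence and exhibit an extremal. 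The claimed integrability and local regularity of $\nabla u$ finally follow from a standard bootstrap exploiting the smoothing properties of the Riesz potential $I_\alpha$.
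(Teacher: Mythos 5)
First, note that this theorem is not proved in the present paper at all: it is quoted as background from \cite{MorozVanSchaftingen2013JFA}, so the relevant comparison is with the proof in that reference. Your proposal is essentially correct and its two halves have different relationships to the original. The necessity half (Nehari identity plus the Poho\v{z}aev identity
\(\tfrac{N-2}{2}\int\abs{\nabla u}^2+\tfrac{N}{2}\int\abs{u}^2=\tfrac{N+\alpha}{2p}\int(I_\alpha\ast\abs{u}^p)\abs{u}^p\),
followed by elimination of the nonlocal term and a sign analysis of the coefficients \((N-2)p-(N+\alpha)\) and \(Np-(N+\alpha)\)) is exactly the argument of \cite{MorozVanSchaftingen2013JFA}, and the regularity class in the statement is precisely what is needed to run the cut-off justification of the Poho\v{z}aev identity; the present paper reproduces the same scheme for nonconstant \(V\) in its proposition on the Poho\v{z}aev identity. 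The existence half, however, takes a genuinely different route: you restore compactness by Schwarz symmetrization (P\'olya--Szeg\H{o} for the gradient, Riesz rearrangement for the Choquard term, then the compact embedding of symmetric decreasing \(H^1\) functions into \(L^q\) for \(q=\tfrac{2Np}{N+\alpha}\in(2,\tfrac{2N}{(N-2)_+})\)), which is essentially Lieb's original 1977 strategy. The cited reference instead minimizes \(\int\abs{\nabla u}^2+\abs{u}^2\) under the constraint \(\int(I_\alpha\ast\abs{u}^p)\abs{u}^p=1\) without any symmetry reduction, recovering compactness by a nonvanishing/translation argument combined with the Brezis--Lieb lemma for Riesz potentials (the same lemma used in the present paper). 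Your route is shorter for this specific power nonlinearity but is tied to the rearrangement structure; the concentration--translation route generalizes to nonautonomous and non-power settings and is the one this paper builds on.

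Two small points to tighten. For \(N=1\) the compactness of the embedding fails for merely radial functions; you need the symmetric \emph{decreasing} property, which gives the pointwise bound \(\abs{u(x)}\le \norm{u}_{L^2}\abs{x}^{-1/2}\) and hence uniform decay, so the argument still closes but should be stated that way. More substantively, the rigorous derivation of the Poho\v{z}aev identity in the stated low-regularity class (testing against \(\varphi(\lambda x)\scalprod{x}{\nabla u(x)}\), controlling the nonlocal term using \(u\in L^{\frac{2Np}{N-\alpha}}(\R^N)\), and passing to the limit \(\lambda\to 0\)) is the genuinely delicate step of the ``only if'' direction and is only asserted in your sketch; this is where the hypotheses \(\nabla u\in H^1_{\loc}(\R^N)\cap L^{\frac{2Np}{N-\alpha}}_{\loc}(\R^N)\) actually earn their keep.
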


If \(p\in\big[\frac{\alpha}{N}+1,\frac{N + \alpha}{(N - 2)_+}\big]\) then $H^1(\R^N)\subset L^\frac{2Np}{N-\alpha}(\R^N)$
by the Sobolev inequality,
and moreover, every $H^1$--solution of \eqref{equationNLChoquard-p} belongs to $W^{2,p}_{\mathrm{loc}}(\R^N)$ for any $p\ge 1$
by a regularity result in \cite{MorozVanSchaftingenBL}*{proposition~3.1}. This implies that the
Choquard equation \eqref{equationNLChoquard-p} with a positive constant potential has no $H^1$--solutions at the end-points
of the above existence interval.

In this note we are interested in the existence and nonexistence of solutions to \eqref{equationNLChoquard-p} with nonconstant
potential $V$ at the {\em lower critical exponent} $p=\frac{\alpha}{N}+1$, that is, we consider the problem
\begin{equation*}
\tag{$\mathcal{P_*}$}
\label{equationNLChoquard}
 - \Delta u + V u  = \bigl(I_\alpha \ast \abs{u}^{\frac{\alpha}{N}+1}\bigr) \abs{u}^{\frac{\alpha}{N}-1} u\quad\text{in \(\R^N\)}.
\end{equation*}
The exponent $\frac{\alpha}{N}+1$ is critical with respect
to the Hardy-Littlewood-Sobolev inequality, which we recall here in a form of minimization problem
\[
  c_\infty =  \inf \Bigl\{ \int_{\R^N} \abs{u}^2 \st u \in L^2 (\R^N) \text{ and }\int_{\R^N} \bigl(I_\alpha \ast \abs{u}^{\frac{\alpha}{N} + 1}\bigr) \abs{u}^{\frac{\alpha}{N} + 1} = 1\Bigr\} > 0.
\]

\begin{theorem}[Optimal Hardy-Littlewood-Sobolev inequality \citelist{\cite{Lieb1983}*{theorem 3.1}\cite{LiebLoss2001}*{theorem 4.3}}]
\label{theoremHLS}
The infimum \(c_\infty\) is achieved if and only if
\begin{equation}\label{HLSoptimal}
  u(x) = C\left(\frac{\lambda}{\lambda^2 + \abs{x - a}^2}\right)^{N/2},
\end{equation}
where \(C > 0\) is a fixed constant, \(a \in \R^N\) and \(\lambda \in (0, \infty)\) are parameters.
\end{theorem}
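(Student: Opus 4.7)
The plan is to follow E. H. Lieb's classical approach, combining symmetric decreasing rearrangement (to get existence of a minimiser) with conformal invariance on the sphere (to classify it).

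First I would record the invariances of the problem. Setting $p = \frac{\alpha}{N}+1$, so that $2Np/(N+\alpha) = 2$, the Hardy--Littlewood--Sobolev inequality shows that the denominator $\int (I_\alpha \ast \abs{u}^p)\abs{u}^p$ is controlled by $\norm{u}_2^{2p}$, so $c_\infty > 0$. A direct change of variable using $I_\alpha(\lambda x) = \lambda^{\alpha - N} I_\alpha(x)$ and the identity $Np = N+\alpha$ shows that both the constraint and the cost $\int \abs{u}^2$ are invariant under translations $u \mapsto u(\cdot-a)$ and under the $L^2$--preserving dilations $u \mapsto \lambda^{N/2} u(\lambda\,\cdot\,)$. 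These two families of symmetries account precisely for the parameters $a$ and $\lambda$ in \eqref{HLSoptimal}.

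Next, to produce a minimiser, take a minimising sequence $(u_n)$. Replacing each $u_n$ by its symmetric decreasing rearrangement $u_n^*$ keeps $\norm{u_n^*}_2 = \norm{u_n}_2$ and, by the Riesz rearrangement inequality applied to the symmetric kernel $I_\alpha$, can only increase the nonlocal term. Hence I may assume the $u_n$ are nonnegative, radial, and nonincreasing. Using the dilation invariance I normalise so that, say, the superlevel set $\{u_n > 1\}$ has measure $1$, which pins down the scale. Extracting a weakly convergent subsequence $u_n \weakto u$ in $L^2(\R^N)$, the normalisation together with the monotonicity gives pointwise bounds of the form $u_n(x) \le C\abs{x}^{-N/2}$ outside a fixed ball, enough to pass to the limit in the nonlocal term (splitting into $\abs{x-y}$ small and large, and using a nonlocal Brezis--Lieb type argument). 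This yields a nontrivial radial nonincreasing minimiser~$u$.

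Finally, I would identify all minimisers. Any minimiser satisfies the Euler--Lagrange equation
\[
u = \mu\,\bigl(I_\alpha \ast u^{p}\bigr)\, u^{p-2}u
\]
for some Lagrange multiplier $\mu > 0$, which after absorbing constants is the integral equation characterising extremals of the HLS inequality in the conformal case. Here I would invoke the conformal structure: pulling back via inverse stereographic projection turns the equation into a conformally covariant equation on $S^N$ whose positive solutions are, by the conformal group action plus a uniqueness argument, exactly the constants; these pull back to the two-parameter family~\eqref{HLSoptimal}. The verification that members of this family realise $c_\infty$ is then a direct computation using the explicit Riesz kernel.

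The step I expect to be hardest is the \emph{classification} of minimisers, not existence: the rearrangement argument gives existence rather cleanly, but rigidity — showing that every minimiser is of the form \eqref{HLSoptimal} up to the symmetries — requires the conformal symmetry of the problem to be exploited, either through the stereographic reduction just sketched or, alternatively, through the method of moving planes applied to the integral Euler--Lagrange equation.
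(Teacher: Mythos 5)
The paper does not actually prove this statement: theorem~\ref{theoremHLS} is imported wholesale from Lieb \cite{Lieb1983} and Lieb--Loss \cite{LiebLoss2001}, so there is no internal argument to compare yours against. What you have written is a reconstruction of Lieb's original proof (symmetric decreasing rearrangement to produce a minimiser, conformal invariance via stereographic projection to classify it), which is precisely the proof the paper is citing; the identification of the symmetry group, the reduction to radial nonincreasing competitors, and the ``if'' direction by direct computation with the explicit kernel are all correct and standard.

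Taken as a proof rather than a plan, however, two steps carry real weight that your sketch does not supply. First, in the existence part, weak \(L^2\) convergence together with the radial bound \(u_n(x) \le C \abs{x}^{-N/2}\) is not by itself enough to ``pass to the limit in the nonlocal term'': that pointwise bound is exactly scale-invariant and fails to be square-integrable at both the origin and infinity, and the constraint functional is dilation-invariant, so one must still exclude that the constraint mass escapes to zero or infinite scale. The Brezis--Lieb splitting only quantifies how much is lost; the argument that \emph{nothing} is lost (Lieb's compactness lemma for the diagonal case, or a strict-subadditivity/concentration-compactness step exploiting your level-set normalisation) is the crux and must be carried out. Second, the classification: the assertion that the positive solutions of the Euler--Lagrange integral equation on \(S^N\) are exactly the constants is the hard rigidity theorem. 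For minimisers Lieb's two-centre rearrangement argument on the sphere suffices; for arbitrary positive solutions one needs the method of moving planes for integral equations (Chen--Li--Ou). You correctly identify this as the hardest step, but in the proposal it is invoked rather than proved, so as written the ``only if'' direction of the theorem rests on an unproven rigidity claim.
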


The form of minimizers in theorem \ref{theoremHLS} suggests that a loss of compactness in \eqref{equationNLChoquard}
may occur by translations and dilations.

In order to characterise the existence of nontrivial solutions for the lower critical Choquard equation \eqref{equationNLChoquard}
we define the critical level
\[
  c_* = \inf \Bigl\{ \int_{\R^N} \abs{\nabla u}^2 + V \abs{u}^2 \st u \in H^1 (\R^N) \text{ and } \int_{\R^N} \bigl(I_\alpha \ast \abs{u}^{\frac{\alpha}{N} + 1}\bigr) \abs{u}^{\frac{\alpha}{N} + 1} = 1\Bigr\}.
\]
It can be checked directly that if $u\in H^1(\R^N)$ achieves the infimum \(c_*\), then a multiple of the minimizer $u$ is a weak solution of Choquard equation \eqref{equationNLChoquard}.
\smallskip

Using a Brezis-Lieb type lemma for Riesz potentials \cite{MorozVanSchaftingen2013JFA}*{lemma 2.4}
and a concentration compactness argument (lemma \ref{lemmaWeakHilbert}), we establish our main abstract result.

\begin{theorem}[Existence of a minimizer]\label{theoremExistence}
Assume that \(V \in L^\infty (\R^N)\) and
\begin{equation}\label{eqliminfV}
\liminf_{\abs{x} \to \infty} V (x) \ge 1.
\end{equation}
If \(c_* < c_\infty\) then the infimum \(c_*\) is achieved and every minimizing sequence for $c_*$
up to a subsequence converges strongly in \(H^1(\R^N)\).
\end{theorem}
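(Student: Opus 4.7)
The plan is a concentration-compactness argument: the only way compactness can fail for a minimizing sequence of $c_*$ is that mass escapes to infinity, where $V$ is asymptotically $1$ and the Hardy-Littlewood-Sobolev optimizers of theorem \ref{theoremHLS} become the natural competitors with energy $c_\infty$. The hypothesis $c_* < c_\infty$ is precisely what forbids this escape. Let $(u_n) \subset H^1(\R^N)$ be a minimizing sequence. I would first show that $(u_n)$ is bounded in $H^1(\R^N)$. If $\norm{u_n}_{L^2} \to \infty$ along a subsequence, the renormalisations $\tilde u_n := u_n / \norm{u_n}_{L^2}$ satisfy $\int_{\R^N}\abs{\nabla \tilde u_n}^2 + V \abs{\tilde u_n}^2 \to 0$ and $\int_{\R^N}(I_\alpha \ast \abs{\tilde u_n}^{\frac{\alpha}{N}+1}) \abs{\tilde u_n}^{\frac{\alpha}{N}+1} \to 0$ by the scaling identities, and are bounded in $H^1(\R^N)$ since $V \in L^\infty(\R^N)$. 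Fatou's lemma then forces the weak limit $\tilde u$ to have vanishing nonlocal integral, hence $\tilde u = 0$, so by Rellich $\tilde u_n \to 0$ in $L^2_\loc$, and the hypothesis $\liminf_{\abs{x}\to\infty} V \ge 1$ gives $\liminf_n \int V\abs{\tilde u_n}^2 \ge 1$, contradicting the vanishing of the energy. Thus $(u_n)$ is bounded and, along a subsequence, $u_n \weakto u$ in $H^1(\R^N)$, $u_n \to u$ almost everywhere and in $L^2_\loc(\R^N)$.

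Set $v_n := u_n - u$. The classical Brezis-Lieb identities for $\nabla u_n$ and $\abs{u_n}^2$, combined with $Vu \in L^2(\R^N)$ and $v_n \weakto 0$ in $L^2$, decompose the energy as
\begin{equation*}
\int_{\R^N}\abs{\nabla u_n}^2 + V\abs{u_n}^2 = \int_{\R^N}\abs{\nabla u}^2 + V\abs{u}^2 + \int_{\R^N}\abs{\nabla v_n}^2 + V\abs{v_n}^2 + o(1).
\end{equation*}
The nonlocal Brezis-Lieb lemma \cite{MorozVanSchaftingen2013JFA}*{lemma 2.4} does the same for the constraint: writing $\lambda := \int_{\R^N} (I_\alpha \ast \abs{u}^{\frac{\alpha}{N}+1}) \abs{u}^{\frac{\alpha}{N}+1}$ and $\mu := \lim_n \int_{\R^N} (I_\alpha \ast \abs{v_n}^{\frac{\alpha}{N}+1}) \abs{v_n}^{\frac{\alpha}{N}+1}$ along a further subsequence, one has $\lambda + \mu = 1$. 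With $\theta := N/(N+\alpha) \in (0,1)$, the homogeneity $u \mapsto tu$ (under which the energy scales by $t^2$ and the nonlocal functional by $t^{2(N+\alpha)/N}$) gives $\int \abs{\nabla u}^2 + V\abs{u}^2 \ge c_* \lambda^\theta$; since $v_n \to 0$ in $L^2_\loc$ and $V \ge 1 - \eps$ outside a large ball, $\liminf_n \int V\abs{v_n}^2 \ge \liminf_n \int \abs{v_n}^2 \ge c_\infty \mu^\theta$ by the definition of $c_\infty$. Passing to the limit in the energy decomposition produces the master inequality $c_* \ge c_* \lambda^\theta + c_\infty \mu^\theta$.

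Since $\theta < 1$ and $\lambda + \mu = 1$, one checks $\mu^\theta \ge 1 - \lambda^\theta$. If $\mu > 0$, a short case analysis on the sign of $c_*$ using $c_\infty > c_*$ yields $c_\infty \mu^\theta > c_*(1 - \lambda^\theta)$, that is $c_* \lambda^\theta + c_\infty \mu^\theta > c_*$, contradicting the master inequality. Therefore $\mu = 0$, $\lambda = 1$, and $u$ attains $c_*$. The energy decomposition then forces $\int \abs{\nabla v_n}^2 + V\abs{v_n}^2 \to 0$, and rerunning the localisation $V \ge 1-\eps$ outside $B_R$ together with $v_n \to 0$ in $L^2(B_R)$ upgrades this to $\norm{v_n}_{H^1} \to 0$. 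The principal obstacle is the exclusion of the case $\mu > 0$, which is exactly where the strict hypothesis $c_* < c_\infty$ plays its decisive role; the boundedness argument and the two Brezis-Lieb splittings are, by comparison, technical but essentially standard.
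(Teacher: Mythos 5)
Your proposal is correct and follows essentially the same route as the paper: a Brezis--Lieb splitting of both the energy and the nonlocal constraint, the Hardy--Littlewood--Sobolev bound \(\int_{\R^N}\abs{v_n}^2 \ge c_\infty \mu^{N/(N+\alpha)}\) on the escaping part, and the strict subadditivity forced by \(c_* < c_\infty\) to exclude \(\mu > 0\). The only differences are presentational: you expand the quadratic forms directly where the paper packages the same localisation estimate into lemma~\ref{lemmaWeakHilbert}, and you spell out the boundedness of the minimizing sequence and the sign-of-\(c_*\) case analysis that the paper leaves implicit.
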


The inequality for the existence of minimizers is sharp, as shown by the following lemma for constant potentials.
\begin{lemma}\label{lemmaV1Sharp}
If $V\equiv 1$, then $c_*=c_\infty$.
\end{lemma}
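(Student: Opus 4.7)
The plan is to prove the two inequalities $c_* \ge c_\infty$ and $c_* \le c_\infty$ separately. The first one is immediate: every $u \in H^1(\R^N)$ admissible for $c_*$ lies in $L^2(\R^N)$ and satisfies the same Hardy-Littlewood-Sobolev normalization appearing in the definition of $c_\infty$, hence
\[
\int_{\R^N} \abs{\nabla u}^2 + \abs{u}^2 \;\ge\; \int_{\R^N} \abs{u}^2 \;\ge\; c_\infty,
\]
and taking the infimum over admissible $u$ gives $c_* \ge c_\infty$.

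For the reverse inequality $c_* \le c_\infty$, I would exploit the dilation invariance of the nonlocal constraint. By theorem \ref{theoremHLS}, an explicit minimizer for $c_\infty$ is given (up to translation and dilation) by $u_*(x) = C(1+\abs{x}^2)^{-N/2}$. Since $u_*(x)$ and $\nabla u_*(x)$ decay like $\abs{x}^{-N}$ and $\abs{x}^{-N-1}$ at infinity respectively, one checks directly that $u_* \in H^1(\R^N)$, so $u_*$ is an admissible test function for $c_*$.

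The key step is the scaling $u_\lambda(x) = \lambda^{N/2} u_*(\lambda x)$ for $\lambda > 0$. Using the homogeneity relation $I_\alpha(\lambda^{-1}y) = \lambda^{N-\alpha} I_\alpha(y)$, a direct change of variables shows that the exponent $N/2$ is exactly the one rendering the HLS functional $\int_{\R^N}(I_\alpha \ast \abs{u}^{\alpha/N + 1}) \abs{u}^{\alpha/N + 1}$ invariant, so $u_\lambda$ remains admissible for $c_*$. Moreover
\[
\int_{\R^N} \abs{u_\lambda}^2 = \int_{\R^N} \abs{u_*}^2 = c_\infty, \qquad \int_{\R^N} \abs{\nabla u_\lambda}^2 = \lambda^2 \int_{\R^N} \abs{\nabla u_*}^2.
\]
Therefore $c_* \le \int_{\R^N}\abs{\nabla u_\lambda}^2 + \abs{u_\lambda}^2 \to c_\infty$ as $\lambda \to 0^+$, yielding $c_* \le c_\infty$.

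There is no substantial obstacle here: the lemma simply records the fact that at the lower critical exponent the $L^2$ norm in $c_\infty$ is precisely the scale-invariant part of the Dirichlet energy, while the gradient term can be rendered arbitrarily small by dilating towards infinity in the $\lambda$-parameter. The only minor verification is that the explicit HLS extremal belongs to $H^1(\R^N)$, which is guaranteed by its polynomial decay for $N \ge 3$.
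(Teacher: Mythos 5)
Your proof is correct and follows essentially the same route as the paper: the inequality \(c_* \ge c_\infty\) is the trivial inclusion, and \(c_* \le c_\infty\) comes from the dilation \(u_\lambda(x) = \lambda^{N/2}u(\lambda x)\), which preserves the Hardy--Littlewood--Sobolev constraint and the \(L^2\) norm while sending the gradient term to zero. The only difference is that you test with the explicit HLS extremal from theorem~\ref{theoremHLS}, whereas the paper dilates an arbitrary \(u \in H^1(\R^N)\) and then invokes the density of \(H^1(\R^N)\) in \(L^2(\R^N)\); both are valid (and your decay estimate in fact gives \(u_* \in H^1(\R^N)\) for every \(N \ge 1\), not just \(N \ge 3\)).
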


Since problem \eqref{equationNLChoquard} with $V\equiv 1$ has no $H^1$--solutions,
this shows that the strict inequality $c_*<c_\infty$ is indeed essential
for the existence of a minimizer for $c_*$.
\smallskip

In fact, the strict inequality $c_*<c_\infty$ is necessary
at least for the strong convergence of {\em all} minimizing sequences.

\begin{proposition}\label{propositionSharp}
Let \(V \in L^\infty (\R^N)\). If
\[
 \limsup_{\abs{x} \to \infty} V (x) \le 1,
\]
then
\[
  c_* \le c_\infty.
\]
In addition, if
\[
 c_*=c_\infty,
\]
then there exists a minimizing sequence for $c_*$ which converges weakly to \(0\) in \(H^1(\R^N)\).
\end{proposition}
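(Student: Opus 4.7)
The plan is to build admissible test functions for $c_*$ from a Hardy--Littlewood--Sobolev optimizer, rescaled to spread its mass to infinity so that it samples only the asymptotic behaviour of $V$. Let $U$ be a minimizer provided by theorem~\ref{theoremHLS}; from the explicit formula \eqref{HLSoptimal} one has $U \in H^1(\R^N) \cap L^\infty(\R^N)$ with $\int_{\R^N} |U|^2 = c_\infty$ and $\int_{\R^N} (I_\alpha \ast |U|^{\frac{\alpha}{N}+1})|U|^{\frac{\alpha}{N}+1} = 1$. For $\lambda > 0$, I will set $U_\lambda(x) := \lambda^{N/2} U(\lambda x)$. A direct change of variables, together with the homogeneity $I_\alpha(y/\lambda) = \lambda^{N-\alpha} I_\alpha(y)$ and the critical relation $N(\tfrac{\alpha}{N}+1) = N+\alpha$, shows that both the $L^2$ norm of $U_\lambda$ and its nonlocal integral are invariant, while $\int |\nabla U_\lambda|^2 = \lambda^2 \int |\nabla U|^2$. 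The regime I want is $\lambda \to 0$, in which $|U_\lambda|^2$ spreads out and the potential is felt only near infinity.

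For the first assertion, I substitute $U_\lambda$ into the Rayleigh quotient defining $c_*$. The change of variables $y = \lambda x$ in the potential term gives, for every $\lambda > 0$,
\[
  c_* \le \lambda^2 \int_{\R^N} |\nabla U|^2 \dif x + \int_{\R^N} V(y/\lambda)\, |U(y)|^2 \dif y.
\]
The second integrand is dominated by the integrable majorant $\|V\|_{L^\infty} |U|^2$, and for every $y \ne 0$ the hypothesis $\limsup_{|x| \to \infty} V(x) \le 1$ forces $\limsup_{\lambda \to 0} V(y/\lambda) \le 1$. Taking $\limsup$ as $\lambda \to 0$ and invoking reverse Fatou's lemma on the second integral then yields $c_* \le c_\infty$.

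For the second assertion, assume $c_* = c_\infty$ and pick any sequence $\lambda_n \to 0$. The sequence $u_n := U_{\lambda_n}$ is admissible, so $\int |\nabla u_n|^2 + V u_n^2 \ge c_*$, while the argument above supplies the matching upper bound $c_\infty = c_*$; hence $(u_n)$ is minimizing. For the weak convergence $u_n \weakto 0$ in $H^1(\R^N)$, the gradients already converge strongly, $\|\nabla u_n\|_{L^2} = \lambda_n \|\nabla U\|_{L^2} \to 0$, while $\|u_n\|_{L^\infty} = \lambda_n^{N/2}\|U\|_{L^\infty} \to 0$ and $\|u_n\|_{L^2}^2 = c_\infty$ is constant. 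Testing against any $\varphi \in C_c^\infty(\R^N)$ gives $\bigabs{\int u_n \varphi} \le \|u_n\|_{L^\infty} \|\varphi\|_{L^1} \to 0$, and density of $C_c^\infty$ in $L^2$ combined with the uniform $L^2$ bound upgrades this to $u_n \weakto 0$ in $L^2(\R^N)$, hence in $H^1(\R^N)$.

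The only delicate step is the passage to the limit in the potential term: the hypothesis on $V$ provides only a one-sided pointwise control with no rate of decay, and boundedness of $V$ is precisely what lets reverse Fatou convert that pointwise control into an integral bound. Every other ingredient reduces to the scale-invariance identities of the lower critical exponent.
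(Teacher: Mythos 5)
Your proof is correct and follows essentially the same route as the paper: rescale a Hardy--Littlewood--Sobolev minimizer so that it spreads out, kill the gradient term by scale invariance, and control the potential term via a reverse Fatou/dominated convergence argument using only the asymptotic bound on $V$. You are in fact somewhat more careful than the paper, which leaves the weak convergence of $(u_{\lambda_n})$ to $0$ implicit and whose displayed scaling of the gradient term contains a sign-of-exponent slip that your normalization $U_\lambda(x)=\lambda^{N/2}U(\lambda x)$ with $\lambda\to 0$ avoids.
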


Using Hardy-Littlewood-Sobolev minimizers \eqref{HLSoptimal} as a family of test functions for $c_*$,
we establish a sufficient condition for the strict inequality $c_*<c_\infty$.

\begin{theorem}\label{theoremSufficient-Hardy}
Let \(V \in L^\infty (\R^N)\). If
\begin{equation}\label{eq:sufficient-Hardy}
  \liminf_{\abs{x} \to \infty} \bigl(1 - V (x)\bigr)\abs{x}^2 > \frac{N^2 (N - 2)_+}{4 (N + 1)},
\end{equation}
then $c_*<c_\infty$ and hence the infimum \(c_*\) is achieved.
\end{theorem}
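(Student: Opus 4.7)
The plan is to test the functional defining $c_*$ on the family of rescaled Hardy-Littlewood-Sobolev extremals given by Theorem~\ref{theoremHLS} and exploit the inverse-square decay of $1 - V$ encoded in \eqref{eq:sufficient-Hardy} to beat the gradient contribution; the conclusion that $c_*$ is attained then follows from Theorem~\ref{theoremExistence}.

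First I would fix a minimizer $w_1 \in H^1(\R^N)$ of $c_\infty$ normalized so that $\int_{\R^N}(I_\alpha \ast \abs{w_1}^{\frac{\alpha}{N}+1})\abs{w_1}^{\frac{\alpha}{N}+1} = 1$; by Theorem~\ref{theoremHLS}, $w_1(x) = C(1 + \abs{x}^2)^{-N/2}$ for some $C > 0$. Introduce the rescaled family $w_\lambda(x) := \lambda^{-N/2} w_1(x/\lambda)$. Using the homogeneity $I_\alpha(\lambda x) = \lambda^{\alpha - N} I_\alpha(x)$, one checks that the map $u \mapsto \lambda^{-N/2} u(\cdot/\lambda)$ preserves both $\norm{u}_{L^2}$ and the nonlocal quantity in the constraint. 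Hence each $w_\lambda$ is admissible for $c_*$ with $\int \abs{w_\lambda}^2 = c_\infty$, while a scaling computation gives $\int \abs{\nabla w_\lambda}^2 = \lambda^{-2} \int \abs{\nabla w_1}^2$. Testing and subtracting $c_\infty$ yields
\[
  c_* - c_\infty \le \lambda^{-2} \int_{\R^N} \abs{\nabla w_1}^2 - \int_{\R^N} (1 - V)\,\abs{w_\lambda}^2.
\]

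The heart of the argument is the sharp identity
\[
  \int_{\R^N} \abs{\nabla w_1}^2 = \frac{N^2 (N-2)}{4(N+1)} \int_{\R^N} \frac{\abs{w_1}^2}{\abs{x}^2},
\]
valid for $N \ge 3$ (both integrals finite), which I would establish by passing to radial integrals and identifying the resulting quotient of Beta functions. Setting $K := N^2(N-2)_+/(4(N+1))$, hypothesis \eqref{eq:sufficient-Hardy} yields $\eps > 0$ and $R > 0$ with $(1 - V(x))\abs{x}^2 \ge K + \eps$ for $\abs{x} \ge R$. Splitting the potential integral at $\abs{x} = R$, rescaling $y = x/\lambda$, and noting that the bounded piece contributes $O(\lambda^{-N}) = o(\lambda^{-2})$ for $N \ge 3$, I obtain
\[
  \int_{\R^N} (1 - V)\abs{w_\lambda}^2 \ge (K + \eps)\,\lambda^{-2} \int_{\R^N} \frac{\abs{w_1}^2}{\abs{x}^2} + o(\lambda^{-2}).
\]
Combining this with the Hardy identity collapses the $\lambda^{-2}$ leading terms and leaves
\[
  \lambda^2 (c_* - c_\infty) \le -\eps \int_{\R^N} \frac{\abs{w_1}^2}{\abs{x}^2} + o(1) < 0
\]
for $\lambda$ large enough, so $c_* < c_\infty$.

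The main obstacle is the sharp Hardy-type identity: the fact that the scaling-critical threshold in \eqref{eq:sufficient-Hardy} is exactly $N^2(N-2)/(4(N+1))$ is dictated by this ratio $\int \abs{\nabla w_1}^2 \big/ \int \abs{w_1}^2 \abs{x}^{-2}$, and this is what makes the hypothesis sharp at leading order. Once that explicit constant is verified by the Beta-function computation, the rest is a routine concentration/scaling estimate built on the explicit form of the HLS extremals; in particular, \eqref{eq:sufficient-Hardy} is consistent with $V \to 1$ at infinity in the scaling-critical regime, legitimizing the appeal to Theorem~\ref{theoremExistence}.
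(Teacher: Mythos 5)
Your argument is essentially the paper's own proof: test $c_*$ on the dilated HLS extremals $w_\lambda(x)=\lambda^{-N/2}w_1(x/\lambda)$, use the identity $\int_{\R^N}\abs{\nabla w_1}^2=\tfrac{N^2(N-2)}{4(N+1)}\int_{\R^N}\abs{w_1}^2\abs{x}^{-2}$ (the paper verifies the underlying radial identity by integration by parts rather than Beta functions, an immaterial difference), and absorb the gradient term into the inverse-square lower bound on $1-V$ for large $\lambda$; your handling of the bounded region $\abs{x}<R$ as an $O(\lambda^{-N})=o(\lambda^{-2})$ error is exactly what is needed. The one omission is that the statement, through the constant $\frac{N^2(N-2)_+}{4(N+1)}$, also covers $N=1,2$, where your Hardy-type identity is vacuous because $\int_{\R^N}\abs{w_1}^2\abs{x}^{-2}$ diverges at the origin; the paper treats this case separately (and more easily), noting that $\lambda^2\int_{\R^N}\abs{\nabla w_\lambda}^2$ stays bounded while $\lambda^2\int_{\R^N}(1-V)\abs{w_\lambda}^2\to\infty$ precisely because that same integral diverges. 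Adding that short case would make your proof complete for all dimensions.
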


In particular, if \(N=1,2\) then condition \eqref{eq:sufficient-Hardy} reduces to
\[
  \liminf_{\abs{x} \to \infty} \bigl(1 - V (x)\bigr)\abs{x}^2 > 0,
\]
that is, the potential \(1-V\) should not decay to zero at infinity
faster then the inverse square of $|x|$.

Employing a version of Poho\v zaev identity for Choquard equation \eqref{equationNLChoquard}
(see proposition \ref{propositionPohozaev} below),
we show that a certain control on the potential $V$ is indeed necessary
for the strict inequality $c_*<c_\infty$.

\begin{proposition}\label{propositionPohozaev1}
Let \(V \in C^1 (\R^N) \cap L^\infty (\R^N)\).
If
\begin{equation}\label{equationSufficient1}
  \sup \Bigl\{\int_{\R^N} \frac{1}{2}\scalprod{\nabla V (x)}{x} \abs{\varphi (x)}^2\dif x \st \varphi \in C^1_c (\R^N) \text{ and } \int_{\R^N} \abs{\nabla \varphi}^2 \le 1\Bigr\} < 1,
\end{equation}
then Choquard equation \eqref{equationNLChoquard}
does not have a nonzero solution $u\in H^1(\R^N)\cap W^{2, 2}_\mathrm{loc}(\R^N)$.
\end{proposition}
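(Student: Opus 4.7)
My plan is to combine the Pohozaev identity of proposition \ref{propositionPohozaev} with the equation tested against $u$ (the Nehari identity) to derive a clean identity expressing the Dirichlet energy of $u$ in terms of the $\frac{1}{2}\scalprod{\nabla V(x)}{x}$-weighted $L^2$-integral of $u$, and then to use hypothesis \eqref{equationSufficient1} to force the Dirichlet energy to vanish.

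Testing the equation against $u$ and integrating by parts yields the Nehari identity
\[
\int_{\R^N} \abs{\nabla u}^2 + V\abs{u}^2 \dif x = \int_{\R^N} \bigl(I_\alpha\ast \abs{u}^{\frac{\alpha}{N}+1}\bigr)\abs{u}^{\frac{\alpha}{N}+1}\dif x.
\]
At the lower critical exponent $p = \frac{\alpha}{N}+1$ one has $\frac{N+\alpha}{2p} = \frac{N}{2}$, so the Pohozaev identity of proposition \ref{propositionPohozaev} takes the form
\[
\frac{N-2}{2}\int_{\R^N}\abs{\nabla u}^2 + \frac{N}{2}\int_{\R^N} V\abs{u}^2 + \frac{1}{2}\int_{\R^N}\scalprod{\nabla V(x)}{x}\abs{u}^2\dif x = \frac{N}{2}\int_{\R^N}\bigl(I_\alpha\ast \abs{u}^{\frac{\alpha}{N}+1}\bigr)\abs{u}^{\frac{\alpha}{N}+1}\dif x.
\]
Subtracting $N/2$ times the Nehari identity from the Pohozaev identity causes the $V\abs{u}^2$ and nonlocal terms to cancel exactly, leaving the key identity
\[
\int_{\R^N}\abs{\nabla u}^2\dif x = \frac{1}{2}\int_{\R^N}\scalprod{\nabla V(x)}{x}\abs{u(x)}^2\dif x.
\]

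By homogeneity, hypothesis \eqref{equationSufficient1} yields a constant $K<1$ such that $\frac{1}{2}\int_{\R^N}\scalprod{\nabla V(x)}{x}\abs{\varphi}^2\dif x \le K\int_{\R^N}\abs{\nabla \varphi}^2\dif x$ for every $\varphi \in C^1_c(\R^N)$. I would extend this inequality to $u \in H^1(\R^N)$ by choosing a sequence of mollified cut-offs $\varphi_n \in C^1_c(\R^N)$ converging to $u$ in $H^1(\R^N)$ and passing to the limit in the inequality. Combined with the key identity this gives $\int \abs{\nabla u}^2 \le K\int \abs{\nabla u}^2$; since $K<1$, we must have $\int \abs{\nabla u}^2 = 0$, and because $u \in H^1(\R^N)$ this forces $u \equiv 0$.

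The main technical obstacle is the density step, because the weight $\scalprod{\nabla V(x)}{x}$ need not be bounded or of fixed sign, so passing to the limit in the one-sided inequality requires care. I would split the weight into its positive and negative parts, applying Fatou's lemma to the positive part (controlled by the hypothesis through $K\int\abs{\nabla \varphi_n}^2 \to K\int\abs{\nabla u}^2$), and using the key identity itself — which already forces $\scalprod{\nabla V(x)}{x}\abs{u(x)}^2$ to be integrable with finite integral — to produce the needed integrability of the negative part against $\abs{u}^2$. Once the extension is justified, the conclusion follows by pure algebra.
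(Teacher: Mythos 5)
Your proposal is correct and follows essentially the same route as the paper: the paper's proof simply declares the proposition a direct consequence of proposition \ref{propositionPohozaev}, whose own proof is exactly the combination of the Poho\v zaev identity \eqref{eqPohozhaev} with the Nehari identity that you carry out, yielding \(\int_{\R^N}\abs{\nabla u}^2 = \tfrac{1}{2}\int_{\R^N}\scalprod{\nabla V(x)}{x}\abs{u}^2\) and then a contradiction with \eqref{equationSufficient1}. Your extra care with the density step from \(C^1_c\) to \(H^1\) fills in a detail the paper leaves implicit (and which is unproblematic under the hypothesis \(\sup_{x}\abs{\scalprod{\nabla V(x)}{x}}<\infty\) carried over from proposition \ref{propositionPohozaev}, since a bounded weight lets one pass to the limit using \(L^2\) convergence alone).
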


In particular, combining \eqref{equationSufficient1} with Hardy's inequality on $\R^N$,
we obtain a simple nonexistence criterion.

\begin{proposition}\label{propositionPohozaev2}
Let $N\ge 3$ and \(V \in C^1 (\R^N) \cap L^\infty (\R^N)\).
If for every \(x \in \R^N\),
\begin{equation}\label{eq:PohozaevHardy}
  \sup_{x \in \R^N} \abs{x}^2 \scalprod{\nabla V (x)}{x} < \frac{(N - 2)^2}{2},
\end{equation}
then Choquard equation \eqref{equationNLChoquard}
does not have a nonzero solution $u\in H^1(\R^N)\cap W^{2, 2}_\mathrm{loc}(\R^N)$.
\end{proposition}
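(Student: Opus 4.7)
The plan is to reduce the statement to proposition \ref{propositionPohozaev1} by using the classical Hardy inequality on $\R^N$ to control the supremum appearing in condition \eqref{equationSufficient1}. Set $M := \sup_{x \in \R^N} \abs{x}^2 \scalprod{\nabla V(x)}{x}$; by hypothesis \eqref{eq:PohozaevHardy}, one has $M < (N-2)^2/2$.

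First I would dispose of the case $M \le 0$: then $\scalprod{\nabla V(x)}{x}\le 0$ pointwise, so the integrand $\tfrac{1}{2}\scalprod{\nabla V(x)}{x}\abs{\varphi(x)}^2$ is nonpositive for every admissible $\varphi$, making the supremum in \eqref{equationSufficient1} at most $0$ and thus strictly less than $1$. In the remaining case $M > 0$, the defining property of $M$ gives the pointwise bound $\scalprod{\nabla V(x)}{x} \le M/\abs{x}^2$ for every $x \ne 0$ (it is automatic where $\scalprod{\nabla V}{x}$ is negative), hence for any $\varphi \in C^1_c(\R^N)$,
\[
  \int_{\R^N} \tfrac{1}{2}\scalprod{\nabla V(x)}{x}\abs{\varphi(x)}^2 \dif x \;\le\; \frac{M}{2} \int_{\R^N} \frac{\abs{\varphi(x)}^2}{\abs{x}^2} \dif x.
\]
Next I would invoke Hardy's inequality on $\R^N$ with $N \ge 3$,
\[
  \int_{\R^N} \frac{\abs{\varphi(x)}^2}{\abs{x}^2} \dif x \;\le\; \frac{4}{(N-2)^2} \int_{\R^N} \abs{\nabla \varphi}^2 \dif x,
\]
to deduce that whenever $\int_{\R^N} \abs{\nabla \varphi}^2 \le 1$, the left-hand side is at most $\frac{2M}{(N-2)^2}$. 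The hypothesis $M < (N-2)^2/2$ makes this strictly less than $1$, which is precisely condition \eqref{equationSufficient1}. Proposition \ref{propositionPohozaev1} then immediately rules out a nonzero $H^1(\R^N)\cap W^{2,2}_\loc(\R^N)$ solution of \eqref{equationNLChoquard}.

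There is no substantive obstacle here: the heavy lifting was done in establishing the Pohozaev-type criterion of proposition \ref{propositionPohozaev1}, and the present statement is a plug-and-play combination of that criterion with a classical inequality. The only minor point requiring attention is the sign analysis for $M$, and the dimensional restriction $N \ge 3$ enters solely because Hardy's inequality in this form is unavailable for $N = 1, 2$.
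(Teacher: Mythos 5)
Your proof is correct and follows essentially the same route as the paper, which states that proposition \ref{propositionPohozaev2} "follows from proposition \ref{propositionPohozaev} and the classical optimal Hardy inequality"; you merely factor the argument through the intermediate criterion \eqref{equationSufficient1} of proposition \ref{propositionPohozaev1}, which is itself a direct consequence of proposition \ref{propositionPohozaev}. The pointwise bound $\scalprod{\nabla V(x)}{x}\le M/\abs{x}^2$ combined with Hardy's inequality with constant $4/(N-2)^2$ is exactly the intended computation, and your sign discussion for $M$ is harmless (note $M\ge 0$ automatically, since the quantity vanishes at $x=0$).
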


For example, for $N\ge 3$ and $\mu>0$, we consider a model equation
\begin{equation}\label{eq:model}
 - \Delta u + \Big(1- \frac{\mu}{1+\abs{x}^2}\Big) u  = \bigl(I_\alpha \ast \abs{u}^{\frac{\alpha}{N}+1}\bigr) \abs{u}^{\frac{\alpha}{N}-1} u\quad\text{in \(\R^N\)}.
\end{equation}
Then proposition~\ref{propositionPohozaev2} implies that \eqref{eq:model} has no nontrivial solutions
for $\mu < \frac{(N-2)^2}{4}$, while for $\mu >\frac{N^2 (N - 2)}{4 (N + 1)}$ assumption \eqref{eq:sufficient-Hardy} is satisfied and hence \eqref{equationNLChoquard} admits a groundstate.
We note that
\[
  \frac{\frac{(N - 2)^2}{4}}{\frac{N^2 (N - 2)}{4 (N + 1)}} = 1 - \frac{N - 2}{N^2},
\]
so that the two bounds are asymptotically sharp when \(N \to \infty\).
We leave as an open question whether \eqref{eq:model} admits a ground state for
$\mu \in \big[\frac{(N-2)^2}{4},\frac{N^2 (N - 2)}{4 (N + 1)}\big]$.
\smallskip

We emphasise that unlike the asymptotic sufficient existence condition \eqref{eq:sufficient-Hardy},
nonexistence condition \eqref{eq:PohozaevHardy}
is a global condition on the whole of $\R^N$.
For example, a direct computation shows that
for $a=0$ and every $\lambda>0$,
a multiple of the Hardy-Littlewood-Sobolev minimizer \eqref{HLSoptimal} solves the equation
\begin{equation}\label{eq:null}
 - \Delta u + \Big(1+ \frac{N(2\abs{x}^2-N\lambda^2)}{(\abs{x}^2+\lambda^2)^2}\Big) u  = \bigl(I_\alpha \ast \abs{u}^{\frac{\alpha}{N}+1}\bigr) \abs{u}^{\frac{\alpha}{N}-1} u\quad\text{in \(\R^N\)}.
\end{equation}
Here \eqref{eq:PohozaevHardy} fails on an annulus centered at the origin,
while $V(x)>1$ and $\scalprod{\nabla V (x)}{x}<0$ for all $\abs{x}$ sufficiently large.
Moreover,
\[
 \lim_{\abs{x} \to \infty} (1 - V (x)) \abs{x}^2 = - 2 N < 0 \le \frac{N^2 (N - 2)_+}{4 (N + 1)}.
\]
Note that the constructed solution $u_\lambda$ satisfies
$$\int_{\R^N} \abs{\nabla u_\lambda}^2 + V \abs{u_\lambda}^2=0.$$
In particular, we are unable to conclude that $c_*<c_\infty$.
We do not know whether \(u_\lambda\) is a groundstate of \eqref{eq:null}. However, if $u_\lambda$ was not a groundstate, then we would have \(c_* < c_\infty\) and \eqref{eq:null} would then have a groundstate
by theorem~\ref{theoremExistence}.

\section{Existence of minimizers under strict inequality: proof of theorem \ref{theoremExistence}}

In order to prove theorem \ref{theoremExistence} we will use a special case
of the classical Brezis-Lieb lemma \cite{BrezisLieb1983} for Riesz potentials.

\begin{lemma}[Brezis-Lieb lemma for the Riesz potential \cite{MorozVanSchaftingen2013JFA}*{lemma 2.4}]
\label{lemmaBrezisLiebriesz}
Let \(N \in \N\), \(\alpha \in (0, N)\),
and \((u_n)_{n \in \N}\) be a bounded sequence in \(L^2 (\R^N)\).
If \(u_n \to u\) almost everywhere on \(\R^N\) as \(n \to \infty\), then
\begin{multline*}
  \int_{\R^N} \bigl(I_\alpha \ast \abs{u}^{\frac{\alpha}{N} + 1}\bigr) \abs{u}^{\frac{\alpha}{N} + 1} = \lim_{n \to \infty} \int_{\R^N} \bigl(I_\alpha \ast \abs{u_n}^{\frac{\alpha}{N} + 1}\bigr) \abs{u_n}^{\frac{\alpha}{N} + 1}\\
 - \int_{\R^N} \bigl(I_\alpha \ast \abs{u_n - u}^{\frac{\alpha}{N} + 1}\bigr) \abs{u_n - u}^{\frac{\alpha}{N} + 1}.
\end{multline*}
\end{lemma}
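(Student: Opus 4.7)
The plan is to exploit the Hilbert-space structure of the Hardy-Littlewood-Sobolev bilinear form together with a classical Brezis-Lieb argument at the Lebesgue exponent adapted to \(I_\alpha\). Write \(q := \frac{\alpha}{N}+1\); since \(q \cdot \frac{2N}{N+\alpha} = 2\), the hypothesis that \((u_n)\) is bounded in \(L^2(\R^N)\) translates into boundedness of \((|u_n|^q)\) in \(L^{2N/(N+\alpha)}(\R^N)\), which is precisely the endpoint space for the Hardy-Littlewood-Sobolev inequality.

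First, by symmetry of the Riesz kernel, the elementary identity
\[
  \int_{\R^N} (I_\alpha \ast f)\, f - \int_{\R^N} (I_\alpha \ast g)\, g
  = \int_{\R^N} \bigl(I_\alpha \ast (f - g)\bigr)\,(f + g)
\]
applied to \(f = |u_n|^q\) and \(g = |u_n - u|^q\) reduces the claim to proving
\[
  \int_{\R^N} \bigl(I_\alpha \ast (|u_n|^q - |u_n - u|^q)\bigr)\bigl(|u_n|^q + |u_n - u|^q\bigr) \;\longrightarrow\; \int_{\R^N} (I_\alpha \ast |u|^q)\, |u|^q.
\]

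Next, I would identify the limits of the two factors in \(L^{2N/(N+\alpha)}(\R^N)\). Boundedness of \((u_n)\) in \(L^2\) gives boundedness of \((|u_n|^q)\) and \((|u_n - u|^q)\) in \(L^{2N/(N+\alpha)}\); since \(2N/(N+\alpha) > 1\), almost-everywhere convergence upgrades to weak convergence, so that \(|u_n|^q + |u_n - u|^q \weakto |u|^q\). For the crucial difference I would use the pointwise inequality
\[
  \bigl||a+b|^q - |b|^q - |a|^q\bigr|^{2/q} \le \eps\, |b|^2 + C_\eps\, |a|^2 \qquad (a, b \in \R,\ \eps > 0),
\]
which follows from \(\bigl||a+b|^q - |b|^q\bigr| \le C\,|a|(|a| + |b|)^{q-1}\) combined with Young's inequality at the conjugate exponents \(q\) and \(q/(q-1)\); evaluated at \(a = u\), \(b = u_n - u\), the standard Brezis-Lieb truncation-and-Fatou argument (the positive part of the excess over \(\eps |u_n-u|^2\) is dominated by the integrable function \(C_\eps |u|^2\)) then yields
\[
  |u_n|^q - |u_n - u|^q \;\longrightarrow\; |u|^q \qquad \text{strongly in } L^{2N/(N+\alpha)}(\R^N).
\]

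Finally, the Hardy-Littlewood-Sobolev inequality says that convolution with \(I_\alpha\) maps \(L^{2N/(N+\alpha)}(\R^N)\) continuously into its dual \(L^{2N/(N-\alpha)}(\R^N)\). Hence \(I_\alpha \ast (|u_n|^q - |u_n - u|^q) \to I_\alpha \ast |u|^q\) strongly in \(L^{2N/(N-\alpha)}\), and the usual strong-weak duality pairing with the weakly convergent factor \(|u_n|^q + |u_n - u|^q\) delivers the required limit. The main obstacle is the refined Brezis-Lieb step: the classical lemma only produces a scalar norm identity, while here one needs genuine strong \(L^{2N/(N+\alpha)}\) convergence of the difference of powers, which forces the pointwise estimate to be calibrated at the nonstandard exponent \(2/q\) rather than at \(q\) itself.
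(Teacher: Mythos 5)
Your argument is correct and complete: the bilinear decomposition, the refined pointwise estimate giving strong \(L^{2N/(N+\alpha)}\) convergence of \(\abs{u_n}^q - \abs{u_n-u}^q\) to \(\abs{u}^q\), and the final strong--weak pairing via the Hardy-Littlewood-Sobolev inequality are exactly the ingredients of the proof in the cited reference \cite{MorozVanSchaftingen2013JFA}*{lemmas 2.3 and 2.4}, which this paper does not reprove but simply quotes. The only (harmless) omission is the remark that \(u \in L^2(\R^N)\) by Fatou's lemma, which is needed so that the dominating function \(C_\eps \abs{u}^2\) in your truncation step is integrable.
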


Our second result is a concentration type lemma.

\begin{lemma}
\label{lemmaWeakHilbert}
Assume that \(V \in L^\infty (\R^N)\) and \(\liminf_{\abs{x} \to \infty} V (x) \ge 1\).
If the sequence \((u_n)_{n \in \N}\) is bounded in \(L^2 (\R^N)\) and converges in \(L^2_\mathrm{loc} (\R^N)\) to \(u\) as \(n \to \infty\),
then
\[
  \int_{\R^N} V \abs{u}^2
  \le \liminf_{n \to \infty} \int_{\R^N} V \abs{u_n}^2 - \int_{\R^N} \abs{u_n - u}^2.
\]
\end{lemma}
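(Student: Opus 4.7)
The plan is to decompose $\abs{u_n}^2$ via the algebraic identity
\[
  \abs{u_n}^2 = \abs{u}^2 + \abs{u_n - u}^2 + 2\,\mathrm{Re}\bigl(\bar u (u_n - u)\bigr),
\]
multiply by $V$ and integrate, so that the claim reduces to
\[
  \liminf_{n \to \infty} \int_{\R^N} (V - 1) \abs{u_n - u}^2 + 2\,\mathrm{Re}\int_{\R^N} V \bar u (u_n - u) \ge 0.
\]

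First I would dispose of the cross term. Since $(u_n)$ is bounded in $L^2(\R^N)$ and converges to $u$ in $L^2_\loc(\R^N)$, every subsequence has a further subsequence that converges weakly in $L^2(\R^N)$, and the weak limit must agree with the local $L^2$ limit $u$. Hence $u_n \weakto u$ in $L^2(\R^N)$. Because $V \in L^\infty(\R^N)$ and $u \in L^2(\R^N)$, the function $V \bar u$ lies in $L^2(\R^N)$, so $\int_{\R^N} V \bar u (u_n - u) \to 0$.

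Next I would handle the term $\int_{\R^N}(V-1)\abs{u_n-u}^2$ using the asymptotic hypothesis on $V$. Given $\eps > 0$, by $\liminf_{\abs{x}\to\infty} V(x) \ge 1$ there exists $R > 0$ such that $V(x) \ge 1 - \eps$ for $\abs{x} \ge R$. Split
\[
  \int_{\R^N} (V - 1)\abs{u_n - u}^2
  = \int_{B_R} (V - 1)\abs{u_n - u}^2 + \int_{\R^N \setminus B_R}(V - 1)\abs{u_n - u}^2.
\]
The first integral tends to $0$ since $V - 1$ is bounded and $u_n \to u$ in $L^2(B_R)$. The second is bounded below by $-\eps \sup_n \norm{u_n - u}_{L^2(\R^N)}^2$, which is a finite constant times $\eps$ by $L^2$-boundedness. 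Letting $\eps \to 0$ then yields $\liminf_n \int_{\R^N}(V-1)\abs{u_n-u}^2 \ge 0$, which combined with the vanishing of the cross term proves the inequality.

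There is no serious obstacle; the only delicate point is that the coefficient $V - 1$ may be negative on an unbounded set, which is precisely why the hypothesis $\liminf_{\abs{x}\to\infty} V(x) \ge 1$ is needed: it guarantees that outside a large ball the negative part of $V - 1$ can be made uniformly arbitrarily small, so the mass of $u_n - u$ escaping to infinity cannot create a negative contribution in the limit.
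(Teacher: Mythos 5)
Your proof is correct and follows essentially the same route as the paper: the paper invokes the Brezis--Lieb lemma to obtain \(\int_{\R^N} V \abs{u}^2 = \lim_n \bigl(\int_{\R^N} V\abs{u_n}^2 - \int_{\R^N} V\abs{u_n-u}^2\bigr)\) and then proves \(\limsup_n \int_{\R^N}(1-V)\abs{u_n-u}^2 \le 0\) by exactly the same splitting into a large ball and its complement that you use. Your only deviation is to replace the citation of Brezis--Lieb by the exact quadratic expansion of \(\abs{u_n}^2\) together with weak \(L^2\) convergence of \(u_n\) to \(u\) (correctly deduced from boundedness plus local convergence), which is a legitimate elementary substitute in this Hilbert-space setting.
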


\begin{proof}
Since the sequence \((u_n)_{n \in \N}\) is bounded in \(L^2 (\R^N)\) and converges in measure to \(u\), we deduce by the Brezis-Lieb lemma
\cite{BrezisLieb1983} (see also \cite{LiebLoss2001}*{theorem 1.9}) that
\[
  \int_{\R^N}  V \abs{u}^2
  = \lim_{n \to \infty} \int_{\R^N} V \abs{u_n}^2 - \int_{\R^N}  V \abs{u_n - u}^2.
\]
Now, we observe that for every \(R > 0\) and every \(n \in \N\),
\[
  \int_{\R^N} (1 - V) \abs{u_n - u}^2
  \le \int_{B_R} (1 - V) \abs{u_n - u}^2 + (1 - \inf_{\R^N \setminus B_R} V)_+ \int_{\R^N} \abs{u_n - u}^2.
\]
By the local \(L^2_\mathrm{loc} (\R^N)\) convergence, we note that
\[
  \lim_{n \to \infty} \int_{B_R} (1 - V) \abs{u_n - u}^2 = 0.
\]
Since \(\lim_{R \to \infty} (1 - \inf_{\R^N \setminus B_R} v)_+ = 0\) and \((u_n - u)_{n \in \N}\) is bounded in \(L^2 (\R^N)\), we conclude that
\[
  \limsup_{n \to \infty} \int_{\R^N} (1 - V) \abs{u_n - u}^2 \le 0;
\]
the conclusion follows.
\end{proof}

\begin{proof}[Proof of theorem~\ref{theoremExistence}]
Let \((u_n)_{n \in \N}\subset H^1 (\R^N)\) be a minimizing sequence for \(c_*\), that is
\[
  \int_{\R^N} \bigl(I_\alpha \ast \abs{u_n}^{\frac{\alpha}{N} + 1}\bigr)\abs{u_n}^{\frac{\alpha}{N} + 1} = 1
\]
and
\[
  \lim_{n \to \infty} \int_{\R^N} \abs{\nabla u_n}^2 + V \abs{u_n}^2 \to c_*.
\]
In view of our assumption \eqref{eqliminfV} we observe that the sequence $(u_n)_{n \in \N}$ is bounded in $H^1 (\R^N)$.
So, there exists \(u \in H^1 (\R^N)\) such that, up to a subsequence, the sequence \((u_n)_{n \in \N}\) converges to \(u\) weakly in \(H^1 (\R^N)\) and, by the classical Rellich-Kondrachov compactness theorem, strongly in \(L^2_\mathrm{loc} (\R^N)\).
By the lower semi-continuity of the norm under weak convergence,
\[
  \int_{\R^N} \abs{\nabla u}^2 + V \abs{u}^2\le \lim_{n \to \infty} \int_{\R^N} \abs{\nabla u_n}^2 + V \abs{u_n}^2 = c_*.
\]
and by Fatou's lemma
\[
  \int_{\R^N} \bigl(I_\alpha \ast \abs{u_n}^{\frac{\alpha}{N} + 1}\bigr)\abs{u_n}^{\frac{\alpha}{N} + 1} \le 1.
\]
In order to conclude, it suffices to prove that equality is achieved in the latter inequality.

We observe that by lemma~\ref{lemmaBrezisLiebriesz},
\[
  \lim_{n \to \infty} \int_{\R^N} \bigl(I_\alpha \ast \abs{u_n - u}^{\frac{\alpha}{N} + 1}\bigr)\abs{u_n - u}^{\frac{\alpha}{N} + 1}
  = 1 - \int_{\R^N} \bigl(I_\alpha \ast \abs{u}^{\frac{\alpha}{N} + 1}\bigr)\abs{u}^{\frac{\alpha}{N} + 1}
\]
while by lemma~\ref{lemmaWeakHilbert} and by the lower-semicontinuity of the norm under weak convergence,
\begin{equation}
\begin{split}\label{eqBL-2}
\int_{\R^N} \abs{\nabla u}^2 + V \abs{u}^2
&\le\liminf_{n \to \infty} \int_{\R^N} \abs{\nabla u_n}^2 +
  \liminf_{n \to \infty} \int_{\R^N} V \abs{u_n}^2 - \abs{u_n - u}^2\\
  &\le \liminf_{n \to \infty} \int_{\R^N} \abs{\nabla u_n}^2 + V \abs{u_n}^2 - \abs{u_n - u}^2\\
  &= c_* - \limsup_{n \to \infty} \int_{\R^N}\abs{u_n - u}^2.
\end{split}
\end{equation}
By definition of \(c_\infty\), we have
\[
   \int_{\R^N} \abs{u_n - u}^2
   \ge c_\infty \Bigl(\int_{\R^N} \bigl(I_\alpha \ast \abs{u_n - u}^{\frac{\alpha}{N} + 1}\bigr)\abs{u_n - u}^{\frac{\alpha}{N} + 1} \Bigr)^\frac{N}{N + \alpha}.
\]
Therefore, we conclude that
\[
  \int_{\R^N} \abs{\nabla u}^2 + V \abs{u}^2
  \le c_* - c_\infty\Bigl( 1 - \int_{\R^N} \bigl(I_\alpha \ast \abs{u}^{\frac{\alpha}{N} + 1}\bigr)\abs{u}^{\frac{\alpha}{N} + 1}\Bigr)^\frac{N}{N + \alpha}.
\]
In view of the definition of \(c_*\) this implies that
\[
  c_* \ge  c_\infty \Bigl( 1 - \int_{\R^N} \bigl(I_\alpha \ast \abs{u}^{\frac{\alpha}{N} + 1}\bigr)\abs{u}^{\frac{\alpha}{N} + 1}\Bigr)^\frac{N}{N + \alpha} + c_* \Bigl( \int_{\R^N} \bigl(I_\alpha \ast \abs{u}^{\frac{\alpha}{N} + 1}\bigr)\abs{u}^{\frac{\alpha}{N} + 1}\Bigr)^\frac{N}{N + \alpha}.
\]
Since by assumption \(c_* < c_\infty\), we conclude that
\[
  \int_{\R^N} \bigl(I_\alpha \ast \abs{u}^{\frac{\alpha}{N} + 1}\bigr)\abs{u}^{\frac{\alpha}{N} + 1}=1,
\]
and hence, by definition of \(c_*\),
\[
  \int_{\R^N} \abs{\nabla u}^2 + V \abs{u}^2 = c_*,
\]
that is the infimum $c_*$ is achieved at $u$. Moreover, from \eqref{eqBL-2} we conclude that $u_n\to u$ in $L^2(\R^N)$.
Since $V\in L^\infty(\R^N)$, this implies that $Vu_n\to Vu$ in $L^2(\R^N)$.
Using \eqref{eqBL-2} again, we conclude that
\[
  \int_{\R^N} \abs{\nabla u}^2=\lim_{n \to \infty} \int_{\R^N} \abs{\nabla u_n}^2.
\]
Since \((u_n)_{n \in \N}\) converges to \(u\) weakly in \(H^1 (\R^N)\),
this implies that \((u_n)_{n \in \N}\) also converges to \(u\) strongly in \(H^1 (\R^N)\).
\end{proof}

\section{Optimality of the strict inequality}

In this section we prove lemma \ref{lemmaV1Sharp} and proposition \ref{propositionSharp}.

\begin{proof}[Proof of lemma \ref{lemmaV1Sharp}]
Let us denote by \(\Tilde{c}_\infty\) the infimum on the right-hand side. By density of the space \(H^1 (\R^N)\) in \(L^2 (\R^N)\) and by continuity in \(L^2\) of the integral functionals involved in the definition of \(c_\infty\), it is clear that \(\Tilde{c}_\infty \ge c_\infty\). We choose now \(u \in H^1 (\R^N)\) and define for \(\lambda > 0\) the function \(u_\lambda \in H^1 (\R^N)\)  for every \(x \in \R^N\) by
$$u_\lambda (x) = \lambda^{N/2}u (\lambda x).$$
We compute for every \(\lambda > 0\) that
\[
  \int_{\R^N} \bigl(I_\alpha \ast \abs{u_\lambda}^{\frac{\alpha}{N} + 1}\bigr) \abs{u_\lambda}^{\frac{\alpha}{N} + 1}=\int_{\R^N} \bigl(I_\alpha \ast \abs{u}^{\frac{\alpha}{N} + 1}\bigr) \abs{u}^{\frac{\alpha}{N} + 1}
\]
and
\[
  \int_{\R^N} \abs{\nabla u_\lambda}^2 + \abs{u_\lambda}^2 = \frac{1}{\lambda^2} \int_{\R^N} \abs{\nabla u}^2 + \int_{\R^N} \abs{u}^2.
\]
Hence,
\[
  \inf_{\lambda > 0} \int_{\R^N} \abs{\nabla u_\lambda}^2 + \abs{u_\lambda}^2 = \int_{\R^N} \abs{u}^2,
\]
and we conclude that \(\Tilde{c}_\infty \le c_\infty\).
\end{proof}

\begin{proof}[Proof of proposition \ref{propositionSharp}]
For $\lambda>0$, let
$$
 u_\lambda (x) =  C\left(\frac{\lambda}{\lambda^2 + \abs{x}^2}\right)^\frac{N}{2} = \lambda^{-\frac{N}{2}}u_1\Big(\frac{x}{\lambda}\Big)
$$
be a family of minimizers for $c_\infty$ given in \eqref{HLSoptimal}.
We observe that
\[
 \int_{\R^N} \bigl(I_\alpha \ast \abs{u_\lambda}^{\frac{\alpha}{N} + 1}\bigr) \abs{u_\lambda}^{\frac{\alpha}{N} + 1}= 1,
\]
whereas by a change of variables,
\[
 \int_{\R^N} \abs{\nabla u_\lambda}^2 + V \abs{u_\lambda}^2
 = \frac{1}{\lambda^2} \int_{\R^N} \abs{\nabla u_1}^2
  + \int_{\R^N} V \Big(\frac{y}{\lambda}\Big) \frac{C^2}{1 + \abs{y}^2}\dif y.
\]
By Lebesgue's dominated convergence theorem
\[
 \limsup_{\lambda \to 0} \int_{\R^N} V \Big(\frac{y}{\lambda}\Big) \frac{C^2}{1 + \abs{y}^2}\dif y
 \le \int_{\R^N} \frac{C^2}{1 + \abs{y}^2}\dif y = c_\infty,
\]
so we conclude that $c_*\le c_\infty$. If, in addition, $c_*=c_\infty$ then for any $\lambda_n\to 0$,
$(u_{\lambda_n})_{n\in\N}$ is a minimizing sequence for $c_*$, and the conclusion follows.
\end{proof}

\section{Sufficient conditions for the strict inequality: proof of theorem \ref{theoremSufficient-Hardy}}

For $a\in\R^N$ and $\lambda>0$, let
\[
 u_\lambda (x) =  C\left(\frac{\lambda}{\lambda^2 + \abs{x-a}^2}\right)^{N/2}
\]
be a family of minimizers for $c_\infty$ as in \eqref{HLSoptimal}.
Then
\[
  \int_{\R^N} \abs{\nabla u_\lambda}^2 + V \abs{u_\lambda}^2
  = c_\infty + \int_{\R^N} \abs{\nabla u_\lambda}^2 + \int_{\R^N} (V - 1) \abs{u_\lambda}^2.
\]
Denote
\[
\mathcal I_V(a,\lambda):=\lambda^2\int_{\R^N} \abs{\nabla u_\lambda}^2 + \lambda^2\int_{\R^N} (V - 1) \abs{u_\lambda}^2 < 0.
\]
To obtain a sufficient conditions for $c_*<c_\infty$ it is enough to show that for some $a\in\R^N$,
\begin{equation}\label{eq:presufficient}
\inf_{\lambda>0}\mathcal{I}_V (a, \lambda)<0,
\end{equation}

\begin{proof}[Proof of theorem \ref{theoremSufficient-Hardy}.]
If \(N \le 2\), then  by \eqref{eq:sufficient-Hardy} there exists \(\mu > 0\) such that
\[
  \liminf_{\abs{x} \to \infty} (1 - V (x))\abs{x}^2 \ge \mu.
\]
Therefore
\[
  \lim_{\lambda \to \infty} \lambda^2 \int_{\R^N} (1 - V) \abs{u_\lambda}^2
  = \lim_{\lambda \to \infty} \int_{\R^N} \frac{\lambda^2(1 - V (\lambda x))}{(1 + \abs{x}^2)^N} \dif x \ge \int_{\R^N} \frac{\mu}{\abs{x}^2 (1 + \abs{x}^2)^N} \dif x = \infty.
\]
Since for every \(\lambda > 0\),
\[
  \lambda^2 \int_{\R^N} \abs{\nabla u_\lambda}^2 = \int_{\R^N} \abs{\nabla u_1}^2 < \infty,
\]
the condition \eqref{eq:presufficient} is satisfied.

If $N\ge 3$, we observe that for every \(\lambda > 0\),
\[
  \int_{\R^N} \abs{\nabla u_\lambda}^2 = \frac{N^2 (N - 2)}{4 (N + 1)} \int_{\R^N} \frac{\abs{u_\lambda (x)}^2}{\abs{x}^2}\dif x.
\]
This follows from the fact that
\[
  \int_{\R^N} \frac{\abs{x}^2}{(1 + \abs{x}^2)^{N + 2}}\dif x = \frac{N - 2}{4 (N + 1)} \int_{\R^N} \frac{1}{\abs{x}^2 (1 + \abs{x}^2)^{N}}\dif x,
\]
which can be proved by two successive integrations by parts.
Then, after a transformation $x=\lambda y+a$,
\begin{equation*}
\mathcal I_V(a,\lambda)=
\int_{\R^N} \Big(\frac{\tfrac{N^2 (N - 2)}{4 (N + 1)}}{\abs{y}^2} - \lambda^2 (1 - V (a+\lambda y)) \Big)\frac{C^2}{(1 + \abs{y}^2)^N} \dif y,
\end{equation*}
and in view of \eqref{eq:sufficient-Hardy}, sufficient condition is \eqref{eq:presufficient} is satisfied for $a=0$, so we conclude that $c_*<c_\infty$.
\end{proof}

Note that if the function \(\lambda \mapsto \lambda^2(1 - V (a+\lambda y))\) is nondecreasing
for every $y\in\R^N$, then \(\lambda \mapsto \mathcal{I}_V (a, \lambda)\) is nonincreasing. Therefore
\(\mathcal{I}_V (a, \lambda)\) admits negative values if and only if it has a negative limit as $\lambda\to\infty$.
The latter is ensured in theorem~\ref{theoremSufficient-Hardy} via asymptotic condition \eqref{eq:sufficient-Hardy}.
This explains that if the function \(\lambda \mapsto \lambda^2(1 - V (a+\lambda y))\) is nondecreasing,
like for instance, in the special case
$$V(x)=1- \frac{\mu}{1+\abs{x}^2},$$
then integral sufficient condition \eqref{eq:presufficient} is in fact equivalent
to the asymptotic sufficient condition \eqref{eq:sufficient-Hardy}.

\section{Poho\v zaev identity and necessary conditions for the existence}

We establish a Poho\v{z}aev type identity,
which extends the identities \eqref{eqPohozhaev} obtained previously for constant potentials \(V\) \citelist{\cite{Menzala1983}\cite{CingolaniSecchiSquassina2010}*{lemma 2.1}\cite{MorozVanSchaftingen2013JFA}*{proposition 3.1}\cite{MorozVanSchaftingenBL}*{theorem 3}}.

\begin{proposition}
\label{propositionPohozaev}
Let $N\ge 3$ and \(V \in C^1 (\R^N) \cap L^\infty (\R^N)\) and \(u \in W^{1, 2} (\R^N)\). If
\[
  \sup_{x \in \R^N} \abs{\scalprod{\nabla V (x)}{}x} < \infty,
\]
and \(u \in W^{2, 2}_\mathrm{loc}(\R^N)\) satisfies Choquard equation \eqref{equationNLChoquard} then
\[
  \int_{\R^N} \abs{\nabla u}^2 = \frac{1}{2} \int_{\R^N} \scalprod{\nabla V (x)}{x}\, \abs{u (x)}^2\dif x.
\]
\end{proposition}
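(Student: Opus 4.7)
The plan is to adapt the classical Poho\v zaev scheme to the variable-potential setting. I would test the equation against $\scalprod{x}{\nabla u}\,\eta_R(x)$, where $\eta_R(x) = \eta(x/R)$ is a smooth cutoff with $\eta \equiv 1$ on $B_1$ and $\supp \eta \subset B_2$, integrate by parts term by term, and pass to the limit $R \to \infty$. Since $u \in W^{2,2}_{\mathrm{loc}}(\R^N)$, this test function is admissible at each fixed $R$; the global bound $u \in H^1(\R^N)$ together with the hypotheses $V \in L^\infty$ and $\sup_x \abs{\scalprod{\nabla V(x)}{x}} < \infty$ ensure that the annular remainders involving $\nabla \eta_R$ (supported where $R \le \abs{x} \le 2R$) vanish in the limit by dominated convergence.

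The contributions from the three terms of the equation split as follows. The Laplacian term yields, via integration by parts and the standard identity $\int \scalprod{x}{\nabla f} = -N\int f$ applied to $f = \abs{\nabla u}^2$, the value $\int(-\Delta u)\scalprod{x}{\nabla u} = -\tfrac{N-2}{2}\int\abs{\nabla u}^2$. The potential term, using $u\scalprod{x}{\nabla u} = \tfrac{1}{2}\scalprod{x}{\nabla u^2}$ and another integration by parts, gives $\int Vu\,\scalprod{x}{\nabla u} = -\tfrac{N}{2}\int V\abs{u}^2 - \tfrac{1}{2}\int\scalprod{\nabla V}{x}\abs{u}^2$, with finiteness secured by the boundedness of $V$ and of $\scalprod{\nabla V}{x}$ together with $u \in L^2$. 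The nonlocal term evaluates to $-\tfrac{N+\alpha}{2p}\int(I_\alpha \ast \abs{u}^p)\abs{u}^p$, which at the critical exponent $p = \tfrac{\alpha}{N}+1$ simplifies to $-\tfrac{N}{2}\int(I_\alpha \ast \abs{u}^p)\abs{u}^p$; this delicate identification is the standard Choquard Poho\v zaev computation already carried out in the prior work cited at the beginning of the section.

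Collecting the three contributions produces
\[
\tfrac{N-2}{2}\int \abs{\nabla u}^2 + \tfrac{N}{2}\int V\abs{u}^2 + \tfrac{1}{2}\int \scalprod{\nabla V}{x}\abs{u}^2 = \tfrac{N}{2}\int (I_\alpha \ast \abs{u}^p)\abs{u}^p,
\]
and subtracting $\tfrac{N}{2}$ times the Nehari identity $\int \abs{\nabla u}^2 + V\abs{u}^2 = \int(I_\alpha \ast \abs{u}^p)\abs{u}^p$ (obtained by testing the equation against $u$, which is admissible since $u \in H^1(\R^N)$) cancels both the $V\abs{u}^2$ and the nonlocal contributions, leaving the desired identity $\int\abs{\nabla u}^2 = \tfrac{1}{2}\int \scalprod{\nabla V}{x}\abs{u}^2$. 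The main technical obstacle is the rigorous cutoff-and-limit argument for the nonlocal Choquard term under only $H^1 \cap W^{2,2}_{\mathrm{loc}}$ regularity without a priori pointwise decay; this delicate step has however already been established in the cited literature on Choquard equations with constant potential, and the variable-$V$ correction produces only routine additional limits, entirely controlled by the stated bound on $\scalprod{\nabla V}{x}$.
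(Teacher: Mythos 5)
Your proposal is correct and follows essentially the same route as the paper: the authors also test the equation against $\varphi(\lambda x)\scalprod{\nabla u(x)}{x}$ (your $\eta_R$ with $\lambda = 1/R$), pass to the limit to obtain the Poho\v{z}aev identity with the extra $\tfrac12\int\scalprod{\nabla V}{x}\abs{u}^2$ term, and subtract $\tfrac{N}{2}$ times the Nehari identity, likewise deferring the delicate limit for the nonlocal term to the cited constant-potential computation. No substantive differences.
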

\begin{proof}
We fix a cut-off function \(\varphi \in C^1_c(\R^N)\) such that \(\varphi = 1\) on \(B_1\) and  we test for \(\lambda \in (0, \infty)\) the equation against the function \(v_\lambda \in W^{1, 2} (\R^N)\) defined for every \(x \in \R^N\) by
\[
 v_\lambda (x) = \varphi(\lambda x) \scalprod{\nabla u (x)}{x}
\]
to obtain the identity
\[
   \int_{\R^N} \scalprod{\nabla u}{\nabla v_\lambda} + \int_{\R^N} V u v_\lambda
= \int_{\R^N} (I_\alpha \ast \abs{u}^{\frac{\alpha}{N} + 1}) \abs{u}^{\frac{\alpha}{N} - 1} u v_\lambda.
\]
We compute for every \(\lambda > 0\), by definition of \(v_\lambda\), the chain rule and by the Gauss integral formula,
\[
\begin{split}
 \int_{\R^N} V u v_\lambda &= \int_{\R^N} V (x) u(x) \varphi (\lambda x)\scalprod{x}{\nabla u(x)} \dif x\\
  &= \int_{\R^N} V (x) \varphi (\lambda x)\scalprod{x}{\nabla \bigl(\tfrac{\abs{u}^2}{2}\bigr) (x)} \dif x\\
  &= - \int_{\R^N}  \bigl( (N V (x) + \scalprod{\nabla V (x)}{x}) \varphi (\lambda x)
                         + V (x) \lambda \scalprod{x}{\nabla \varphi (\lambda x)} \bigr)
                    \frac{\abs{u(x)}^2}{2}
        \dif x.
\end{split}
\]
Since \(\sup_{x \in \R^N} \scalprod{\nabla V (x)}{x} < \infty\),
by Lebesgue's dominated convergence theorem it holds
\[
 \lim_{\lambda \to 0} \int_{\R^N} V u v_\lambda = - \frac{N}{2} \int_{\R^N} V \abs{u}^2 - \frac{1}{2} \int_{\R^N} \scalprod{\nabla V (x)}{x} \abs{u}^2.
\]
By Lebesgue's dominated convergence again, since \(u \in W^{1, 2} (\R^N)\), we have (see \cite{MorozVanSchaftingen2013JFA}*{proof of proposition 3.1} for the details)
\[
 \lim_{\lambda \to 0} \int_{\R^N} \scalprod{\nabla u}{\nabla v_\lambda} = - \frac{N - 2}{2} \int_{\R^N} \abs{\nabla u}^2.
\]
and
\[
 \lim_{\lambda \to 0} \int_{\R^N} (I_\alpha \ast \abs{u}^{\frac{\alpha}{N} + 1}) \abs{u}^{\frac{\alpha}{N} - 1} u\, v_\lambda
= - \frac{N}{2} \int_{\R^N} (I_\alpha \ast \abs{u}^{\frac{\alpha}{N} + 1}) \abs{u}^{\frac{\alpha}{N} + 1}.
\]
We have thus proved the Poho\v{z}aev type identity
\begin{multline}
\label{eqPohozhaev}
  \frac{N - 2}{2} \int_{\R^N} \abs{\nabla u}^2 + \frac{N}{2} \int_{\R^N} V \abs{u}^2
 + \frac{1}{2} \int_{\R^N} \scalprod{\nabla V (x)}{x} \, \abs{u (x)}^2 \dif x\\ = \frac{N}{2} \int_{\R^N} (I_\alpha \ast \abs{u}^{\frac{\alpha}{N} + 1}) \abs{u}^{\frac{\alpha}{N} + 1}.
\end{multline}
If we test the equation against \(u\), we obtain the identity
\[
  \int_{\R^N} \abs{\nabla u}^2 + \int_{\R^N} V \abs{u}^2 = \int_{\R^N} (I_\alpha \ast \abs{u}^{\frac{\alpha}{N} + 1}) \abs{u}^{\frac{\alpha}{N} + 1};
\]
the combination of those two identities yields the conclusion.
\end{proof}

\begin{proof}[Proof of propositions \ref{propositionPohozaev1} and \ref{propositionPohozaev2}.]
Proposition \ref{propositionPohozaev1} is a direct consequence of proposition \ref{propositionPohozaev},
while proposition \ref{propositionPohozaev2} follows from proposition \ref{propositionPohozaev}
and the classical optimal Hardy inequality on $\R^N$,
\[
  \frac{(N - 2)^2}{4}\int_{\R^N} \frac{\abs{u(x)}^2}{\abs{x}^2}\dif x
  \le \int_{\R^N} \abs{\nabla u}^2
\]
which is valid for all $u\in H^1(\R^N)$ (see for example \cite{Willem2013}*{theorem 6.4.10 and exercise 6.8}).
\end{proof}

\begin{bibdiv}
\begin{biblist}

\bib{BrezisLieb1983}{article}{
   author={Brezis, Ha{\"{\i}}m},
   author={Lieb, Elliott},
   title={A relation between pointwise convergence of functions and
   convergence of functionals},
   journal={Proc. Amer. Math. Soc.},
   volume={88},
   date={1983},
   number={3},
   pages={486--490},
   issn={0002-9939},
}

\bib{CingolaniClappSecchi2012}{article}{
   author={Cingolani, Silvia},
   author={Clapp, M{\'o}nica},
   author={Secchi, Simone},
   title={Multiple solutions to a magnetic nonlinear Choquard equation},
   journal={Z. Angew. Math. Phys.},
   volume={63},
   date={2012},
   number={2},
   pages={233--248},
   issn={0044-2275},
%   doi={10.1007/s00033-011-0166-8},
}

\bib{CingolaniSecchi}{article}{
   author={Cingolani, Silvia},
   author={Secchi, Simone},
   title={Multiple \(\mathbb{S}^{1}\)-orbits for the Schr\"odinger-Newton system},
   journal={Differential and Integral Equations},
   volume={26},
   number={9/10},
   pages={867--884},
   date={2013},
   %eprint={arXiv:1304.7639},
}

\bib{CingolaniSecchiSquassina2010}{article}{
   author={Cingolani, Silvia},
   author={Secchi, Simone},
   author={Squassina, Marco},
   title={Semi-classical limit for Schr\"odinger equations with magnetic
   field and Hartree-type nonlinearities},
   journal={Proc. Roy. Soc. Edinburgh Sect. A},
   volume={140},
   date={2010},
   number={5},
   pages={973--1009},
   issn={0308-2105},
}

\bib{ClappSalazar}{article}{
   author={Clapp, M{\'o}nica},
   author={Salazar, Dora},
   title={Positive and sign changing solutions to a nonlinear Choquard
   equation},
   journal={J. Math. Anal. Appl.},
   volume={407},
   date={2013},
   number={1},
   pages={1--15},
   issn={0022-247X},
%   doi={10.1016/j.jmaa.2013.04.081},
}

\bib{DevreeseAlexandrov2009}{book}{
  author={Devreese, Jozef T.},
  author={Alexandrov, Alexandre S. },
  title={Advances in polaron physics},
  year={2010},
  publisher={Springer},
  volume={159},
  series={Springer Series in Solid-State Sciences},
  pages={ix+167},
  doi={10.1007/978-3-642-01896-1_7},
}

\bib{KRWJones1995newtonian}{article}{
  title={Newtonian Quantum Gravity},
  author={Jones, K. R. W.},
  journal={Australian Journal of Physics},
  volume={48},
  number={6},
  pages={1055-1081},
  year={1995},
}

\bib{Lieb1977}{article}{
   author={Lieb, Elliott H.},
   title={Existence and uniqueness of the minimizing solution of Choquard's
   nonlinear equation},
   journal={Studies in Appl. Math.},
   volume={57},
   date={1976/77},
   number={2},
   pages={93--105},
}

\bib{Lieb1983}{article}{
   author={Lieb, Elliott H.},
   title={Sharp constants in the Hardy-Littlewood-Sobolev and related
   inequalities},
   journal={Ann. of Math. (2)},
   volume={118},
   date={1983},
   number={2},
   pages={349--374},
   issn={0003-486X},
%    review={\MR{717827 (86i:42010)}},
   doi={10.2307/2007032},
}

\bib{LiebLoss2001}{book}{
   author={Lieb, Elliott H.},
   author={Loss, Michael},
   title={Analysis},
   series={Graduate Studies in Mathematics},
   volume={14},
   edition={2},
   publisher={American Mathematical Society},
   place={Providence, RI},
   date={2001},
   pages={xxii+346},
   isbn={0-8218-2783-9},
}

\bib{Lions1980}{article}{
   author={Lions, P.-L.},
   title={The Choquard equation and related questions},
   journal={Nonlinear Anal.},
   volume={4},
   date={1980},
   number={6},
   pages={1063--1072},
   issn={0362-546X},
}

\bib{Lions1984-1}{article}{
   author={Lions, P.-L.},
   title={The concentration-compactness principle in the calculus of
   variations. The locally compact case.},
   part = {I},
   journal={Ann. Inst. H. Poincar\'e Anal. Non Lin\'eaire},
   volume={1},
   date={1984},
   number={2},
   pages={109--145},
   issn={0294-1449},
}

\bib{Ma-Zhao-2010}{article}{
   author={Ma Li},
   author={Zhao Lin},
   title={Classification of positive solitary solutions of the nonlinear
   Choquard equation},
   journal={Arch. Ration. Mech. Anal.},
   volume={195},
   date={2010},
   number={2},
   pages={455--467},
   issn={0003-9527},
}

\bib{Menzala1980}{article}{
   author={Menzala, Gustavo Perla},
   title={On regular solutions of a nonlinear equation of Choquard's type},
   journal={Proc. Roy. Soc. Edinburgh Sect. A},
   volume={86},
   date={1980},
   number={3-4},
   pages={291--301},
   issn={0308-2105},
}

\bib{Menzala1983}{article}{
   author={Menzala, Gustavo Perla},
   title={On the nonexistence of solutions for an elliptic problem in
   unbounded domains},
   journal={Funkcial. Ekvac.},
   volume={26},
   date={1983},
   number={3},
   pages={231--235},
   issn={0532-8721},
}

\bib{MorozVanSchaftingen2013JFA}{article}{
   author={Moroz, Vitaly},
   author={Van Schaftingen, Jean},
   title={Groundstates of nonlinear Choquard equations: Existence,
   qualitative properties and decay asymptotics},
   journal={J. Funct. Anal.},
   volume={265},
   date={2013},
   number={2},
   pages={153--184},
   issn={0022-1236},
%    review={\MR{3056699}},
   doi={10.1016/j.jfa.2013.04.007},
}

\bib{MorozVanSchaftingenBL}{article}{
  author = {Moroz, Vitaly},
  author = {Van Schaftingen, Jean},
  title = { Existence of groundstates for a class of nonlinear Choquard equations},
  journal = {to appear in Trans. Amer. Maths. Soc.},
  eprint={arXiv:1212.2027}
}

\bib{MorozVanSchaftingen2014CalcVar}{article}{
  author = {Moroz, Vitaly},
  author = {Van Schaftingen, Jean},
  title = { Semi-classical states for the Choquard equation.},
  journal = {to appear in Calc. Var. Partial Differential Equations.},
  eprint={arXiv:1308.1571},
  doi={10.1007/s00526-014-0709-x},
}

\bib{Moroz-Penrose-Tod-1998}{article}{
   author={Moroz, Irene M.},
   author={Penrose, Roger},
   author={Tod, Paul},
   title={Spherically-symmetric solutions of the Schr\"odinger-Newton
   equations},
   journal={Classical Quantum Gravity},
   volume={15},
   date={1998},
   number={9},
   pages={2733--2742},
   issn={0264-9381},
}

\bib{Pekar1954}{book}{
   author={Pekar, S.},
   title={Untersuchung {\"u}ber die Elektronentheorie der Kristalle},
   publisher={Akademie Verlag},
   place={Berlin},
   date={1954},
   pages={184},
}

\bib{Willem2013}{book}{
  author = {Willem, Michel},
  title = {Functional analysis},
  subtitle = {Fundamentals and Applications},
  series={Cornerstones},
  publisher = {Birkh\"auser},
  place = {Basel},
  volume = {XIV},
  pages = {213},
  date={2013},
}

\end{biblist}
\end{bibdiv}

\end{document}